\newtheorem{Theorem}{Theorem}
\newtheorem{Lemma}{Lemma}
\newtheorem{Remark}{Remark}
\newtheorem{Cor}{Corollary}
\newtheorem{Assumption}{Assumption}
\newcommand{\sgn}{\textup{sgn}}
\newcommand{\sign}{\textup{sign}}
\newcommand\footnoteref[1]{\protected@xdef\@thefnmark{\ref{#1}}\@footnotemark}
\DeclareRobustCommand{\rchi}{{\mathpalette\irchi\relax}}
\newcommand{\irchi}[2]{\raisebox{\depth}{$#1\chi$}}
\def\BibTeX{{\rm B\kern-.05em{\sc i\kern-.025em b}\kern-.08em
		T\kern-.1667em\lower.7ex\hbox{E}\kern-.125emX}}
\begin{document}
	
	\title{Robust Distributed Fixed-Time Economic Dispatch under Time-Varying Topology}
	
	\author{Mayank~Baranwal$^a$, Kunal~Garg$^b$, \IEEEmembership{Student Member, IEEE}, Dimitra~Panagou$^{b}$, \IEEEmembership{Senior Member, IEEE}, and~Alfred~O.~Hero$^{c}$, \IEEEmembership{Fellow, IEEE}
		\thanks{The authors are with the $^a$Division of Data \& Decision Sciences, Tata Consultancy Services, Mumbai, India; Departments of $^b$Aerospace Engineering, $^c$Electrical Engineering and Computer Science, University of Michigan, Ann Arbor, MI 48109, USA. e-mail(s): \texttt{ baranwal.mayank@tcs.com, kgarg@umich.edu, dpanagou@umich.edu, hero@umich.edu}.}}
	
	\maketitle
	\thispagestyle{empty}
	\begin{abstract}
		The centralized power generation infrastructure that defines the North American electric grid is slowly moving to the distributed architecture due to the explosion in use of renewable generation and distributed energy resources (DERs), such as residential solar, wind turbines and battery storage. Furthermore, variable pricing policies and profusion of flexible loads entail frequent and severe changes in power outputs required from the individual generation units, requiring fast availability of power allocation. To this end, a \emph{fixed-time} convergent, fully distributed  economic dispatch algorithm for scheduling optimal power generation among a set of DERs is proposed. The proposed algorithm incorporates both load balance and generation capacity constraints.
	\end{abstract}
	
	\begin{IEEEkeywords}
		Distributed algorithms, Optimization, Nonlinear control systems,  Power generation dispatch
	\end{IEEEkeywords}
	
	\vspace{-1em}\section{Introduction}\label{sec:Intro}
	\IEEEPARstart{E}{conomic} dispatch (ED) is one of the key optimization issues in power systems, and concerns with optimal allocation of power output from a number of generators in order to meet the system load requirements at the lowest possible cost, subject to operation constraints on generators \cite{wood2013power}. Various analytical and heuristic techniques have been proposed to address the ED problem including, but not limited to, Newton-Raphson gradient descent~\cite{lin1992direct}, using optimized transition matrix~\cite{yan2019consensus}, and estimating power mismatch~\cite{pourbabak2017novel}. However, these methods address the ED problem in a \emph{centralized} manner, where a global control center processes the information and implements the centralized dispatch algorithms, requiring access to global quantities, such as load and generator output values of each node in the network. While centralized architectures offer easy implementation of ED algorithms, they are vulnerable to single-point failures. In addition, centralized ED algorithms do not scale well with the number of generators and need restructuring as the power system evolves with time \cite{chen2016distributed}. To increase robustness, scalability and efficiency, the centralized power generation infrastructure is slowly moving towards a distributed implementation. As a consequence, several distributed dispatch algorithms have been proposed in the recent years \cite{chen2016distributed, feng2017finite}. 
	Distributed architectures avoid single-point failures and offer plug-and-play capabilities, where a DER can be added or removed from an existing power system in a communication agnostic fashion.
	
	An ED problem with load balance constraint can be formulated as a constrained optimization problem, characterized by a Lagrange multiplier corresponding to the constraint. This Lagrange multiplier is often referred as the incremental cost, and must be equal for all generators at optimality \cite{zhang2012convergence}. Thus a centralized ED problem can be addressed in a distributed manner by reaching consensus on incremental cost associated with each generation unit. Several consensus based approaches, namely the incremental cost consensus (ICC) \cite{zhang2012convergence}, ratio consensus method \cite{binetti2014distributed}, and distributed gradient method \cite{zhang2014online} have been proposed as viable alternatives of the centralized ED method. While these methods do alleviate some of the major issues associated with centralized ED algorithms, they have their own disadvantages. For instance, the algorithm in \cite{yang2013consensus} requires global information about power output from each generator, as well as the total load demand. 
	% Thus, the method is not fully distributed. Similarly, the value of step-size, particularly near the constraint bounds, plays a significant role with regards to convergence behavior of distributed gradient method. Moreover, each of the aforementioned distributed ED algorithms only guarantee asymptotic convergence.
	
	As the power systems become more complex due to increased penetration of DERs, flexible loads and dynamic pricing, power outputs required from every generation units undergo frequent and severe changes, and thus it is crucial to investigate distributed ED algorithms with fast convergence characteristics. Recent works, such as \cite{chen2016distributed, feng2017finite}, investigate distributed nonlinear protocols that guarantee consensus on incremental costs associated with each generation unit, as well as its convergence to optimal solution in a finite time. The notion of finite-time stability of dynamical systems was introduced by the authors in \cite{bhat2000finite}, guaranteeing convergence to the equilibrium point within a finite time. The algorithms proposed in \cite{chen2016distributed, feng2017finite} are based either on the finite-time consensus protocol \cite{wang2010finite} or the finite-time average consensus algorithm (FACA) \cite{kibangou2012graph}. However, convergence time, even though finite, depends on the initial values of the individual incremental costs, and increase as in the initial conditions go farther away from the equilibrium point. Fixed-time convergence \cite{polyakov2012nonlinear} is a stronger notion of convergence, where convergence-time does not depend upon the initial values of the incremental costs. In this paper, a distributed fixed-time algorithm is proposed or time-varying communication graphs and additive uncertainties.
	% In contrast to existing work on finite-time ED algorithm, t
	The key contributions of this paper are:\\
	{\bf Fixed-time convergence}: A novel fixed-time consensus algorithm is proposed, and employed to solve large-scale distributed ED problem, within a user-specified fixed-time.\\
	{\bf Time-varying communication topology}: Different from the physical architecture of the power system and in contrast to most of the aforementioned work on finite-time approaches, the proposed framework allows for a separate communication network, the topology of which is allowed to vary with time.\\
	{\textbf{Robustness to additive disturbances}:
		The fixed-time consensus algorithm developed in this paper is designed to be robust with respect to a class of additive disturbances.}
	\\
	{\bf Consistent discretization}: 
	% While optimization methods in continuous-time have major theoretical relevance, sampling constraints may preclude continuous-time acquisition and updating  \cite{chen2016distributed, feng2017finite}. 
	A rate-matching discretization scheme is discussed, which allows the mentioned convergence properties to be preserved for discretized implementation.

	\vspace{-.5em}\section{Preliminaries}\label{sec:prelim}
	
	\vspace{-.5em}\subsection{Notation}
	We use $\mathbb{R}$, $\mathbb R_+$ to denote the set of reals and non-negative reals, respectively. $\mathcal{G}=(A,\mathcal{V})$ represents an undirected graph with adjacency matrix $A = [a_{ij}]\in \mathbb R^{N\times N}$, $a_{ij}\in\{0,1\}$ and set of nodes $\mathcal{V} = \{1, 2, \cdots, N\}$. $\mathcal{N}_i$ represents the set of 1-hop neighbors of node $i$. In particular, it follows that $\sum_{j=1}^Na_{ij}f_j(\cdot)=\sum_{j\in\mathcal{N}_i}f_j(\cdot)$ for any function $f_j(\cdot)$. $\Lambda_2(\cdot)$ represents the second smallest eigenvalue of a matrix. Finally, for any $x\in\mathbb{R}$, we define function $\sgn^\mu: \mathbb R\rightarrow\mathbb R$ as: $\sgn^\mu(x) = |x|^{\mu}\sign(x)$, $\mu>0$, with $\sgn(x)\triangleq\sign(x)$.
	
	\vspace{-1em}\subsection{Fixed-time stability (FxTS)}\label{subsec:FxTS}
	Consider the following dynamical system: 
	{\small\begin{align}\label{eq:sys}
			\dot x(t) = f(x(t)),
	\end{align}}\normalsize
	where $x\in \mathbb R^d$, $f: \mathbb R^d \rightarrow \mathbb R^d$ is a continuous function with $f(0)=0$. As defined in \cite{polyakov2012nonlinear}, the origin is said to be a fixed-time stable (FxTS) equilibrium of \eqref{eq:sys} if it is Lyapunov stable and $\lim_{t\to T} x(t)=0$, where $T<\infty$ is independent of $x(0)$.
	\begin{Lemma}[\cite{polyakov2012nonlinear}]\label{lemma:FxTS}
		Suppose there exists a positive definite function $V$ for system \eqref{eq:sys} such that $\dot V(x(t)) \leq -a(V(x(t)))^p-b(V(x(t)))^q$ with $a,b>0$, $0<p<1$ and $q>1$. Then, the origin of \eqref{eq:sys} is FxTS with settling time function satisfying $T \leq \frac{1}{a(1-p)} + \frac{1}{b(q-1)}$.
	\end{Lemma}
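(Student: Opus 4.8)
The plan is to first establish Lyapunov stability and then obtain the uniform settling-time bound by comparison with a scalar differential equation. Since $V$ is positive definite and the right-hand side $-aV^p - bV^q$ is strictly negative whenever $V>0$, we have $\dot V(x(t)) < 0$ away from the origin; hence $V$ is non-increasing along trajectories and the origin is Lyapunov stable. It then remains to show that $V$, and therefore $x$, reaches zero within a time that does not depend on $x(0)$.

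The key reduction is to invoke the comparison principle. Let $y(t)$ solve the scalar initial value problem $\dot y = -ay^p - by^q$ with $y(0) = V(x(0))$. Because $\dot V \le -aV^p - bV^q$, the comparison lemma yields $0 \le V(x(t)) \le y(t)$ for all $t$ in the common interval of existence, so it suffices to bound the time $T$ at which the scalar trajectory $y$ reaches the origin. As the scalar dynamics are autonomous and the right-hand side is strictly negative for $y>0$, I would separate variables to express the settling time as
\begin{align*}
T = \int_0^{V(x(0))} \frac{dy}{a y^p + b y^q}.
\end{align*}

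The main obstacle, and the crux of the fixed-time claim, is showing that this integral is bounded \emph{uniformly} in the initial value $V(x(0))$; this is precisely what distinguishes fixed-time from mere finite-time convergence. The decisive trick is to split the integration range at $y=1$ after enlarging the upper limit to $\infty$ (which only increases the positive integrand). On $(0,1]$ one has $a y^p + b y^q \ge a y^p$, so the integrand is at most $\frac{1}{a}y^{-p}$, which is integrable near $0$ precisely because $p<1$, giving $\int_0^1 \le \frac{1}{a(1-p)}$. On $[1,\infty)$ one has $a y^p + b y^q \ge b y^q$, so the integrand is at most $\frac{1}{b}y^{-q}$, which is integrable at infinity precisely because $q>1$, giving $\int_1^\infty \le \frac{1}{b(q-1)}$. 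Summing the two estimates yields $T \le \frac{1}{a(1-p)} + \frac{1}{b(q-1)}$, a bound independent of $x(0)$, which establishes fixed-time stability with the stated settling-time estimate. The one technical point I would be careful about is that $y^p$ is not Lipschitz at the origin, so I would justify the comparison step on $V>0$ and pass to the limit as $V\to 0^+$, using the non-increase of $V$ together with the finiteness of the integral.
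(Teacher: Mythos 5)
The paper does not actually prove this lemma; it is quoted verbatim from the cited reference \cite{polyakov2012nonlinear}, so there is no in-paper proof to compare against. Your argument is correct and is essentially the standard proof of that result: comparison with the scalar dynamics $\dot y = -ay^p - by^q$, separation of variables, and bounding $\int_0^{\infty} \frac{dy}{ay^p + by^q}$ by splitting at $y=1$ — Polyakov's original argument performs the same computation phrased as two phases (decay from $V(x(0))$ to $V=1$ dominated by the $-bV^q$ term, then from $V=1$ to $0$ dominated by the $-aV^p$ term). Your caution about the non-Lipschitz right-hand side at the origin, and resolving it by working on $\{V>0\}$ and using monotonicity of $V$, is also the right way to make the comparison step rigorous.
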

	
	\vspace{-1em} \subsection{Overview of graph theory}\label{subsec:graph}
	This subsection presents some Lemmas from graph theory and other inequalities that will be useful later. 
	\begin{Lemma}[\cite{hardy1988inequalities}]\label{lemma:ineq}
		Let $t_i\geq 0$ for $i\in \{1,2,\cdots, N\}$, then:
		\begin{subequations}\label{eq:ineq}
			{\small\begin{align}
					\sum\nolimits_{i = 1}^N t_i^p& \geq \Big(\sum\nolimits_{i = 1}^Nt_i\Big)^p, \; 0<p\leq 1,\\
					\sum\nolimits_{i = 1}^N t_i^p& \geq N^{1-p}\Big(\sum\nolimits_{i = 1}^Nt_i\Big)^p, \; p>1.
			\end{align}}\normalsize
		\end{subequations}
	\end{Lemma}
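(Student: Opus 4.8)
The plan is to prove the two inequalities separately, treating them as the classical superadditivity (for $0<p\le 1$) and convexity (for $p>1$) properties of the map $x\mapsto x^p$ on $\mathbb{R}_+$. In both cases the degenerate situation $\sum_i t_i=0$ forces every $t_i=0$ and the claim holds trivially, so throughout I would assume $S\triangleq\sum_{i=1}^N t_i>0$.

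For the first inequality with $0<p\le 1$, the key observation is the subadditivity estimate $(a+b)^p\le a^p+b^p$ for $a,b\ge 0$. To see this, I would normalize by $s=a+b>0$ and note that $a/s,\,b/s\in[0,1]$; since $p\le 1$ we have $x^p\ge x$ for every $x\in[0,1]$, hence $(a/s)^p+(b/s)^p\ge (a/s)+(b/s)=1$, and multiplying through by $s^p$ gives the claim. A straightforward induction on $N$ then yields $\big(\sum_{i=1}^N t_i\big)^p\le \sum_{i=1}^N t_i^p$, which is exactly the stated bound.

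For the second inequality with $p>1$, I would invoke the convexity of $x\mapsto x^p$ on $\mathbb{R}_+$ together with Jensen's inequality applied to the uniform weights $1/N$:
\begin{align*}
\Big(\frac{1}{N}\sum_{i=1}^N t_i\Big)^p \le \frac{1}{N}\sum_{i=1}^N t_i^p.
\end{align*}
Multiplying both sides by $N^p$ and rearranging gives $\big(\sum_{i=1}^N t_i\big)^p\le N^{p-1}\sum_{i=1}^N t_i^p$, i.e.\ $\sum_{i=1}^N t_i^p\ge N^{1-p}\big(\sum_{i=1}^N t_i\big)^p$, as required. Equivalently, one could read this off directly from the power-mean inequality between the $p$-mean and the arithmetic mean.

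Neither step presents a genuine obstacle: both are standard consequences of the shape (concave versus convex) of the power function, and indeed the result is quoted from \cite{hardy1988inequalities}. The only points requiring minor care are handling the all-zero case separately and tracking the correct direction of the inequality as one passes between the concave regime $p\le 1$ and the convex regime $p>1$. The sign of $1-p$ flipping across $p=1$ is precisely what produces the extra factor $N^{1-p}$ in the second bound and its absence (equivalently $N^{0}=1$) in the first, so I would be careful to keep the two regimes cleanly separated rather than attempting a unified argument.
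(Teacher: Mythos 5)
Your proof is correct. Note, however, that the paper itself offers no proof of this lemma: it is quoted as a known result from the cited reference (Hardy, Littlewood and P\'olya, \emph{Inequalities}), so there is no in-paper argument to compare against. Your two-regime treatment is the standard verification of that classical fact: for $0<p\le 1$ you establish subadditivity $(a+b)^p\le a^p+b^p$ by the normalization $a/s,\,b/s\in[0,1]$ and the pointwise bound $x^p\ge x$ on $[0,1]$, then extend by induction; for $p>1$ you apply Jensen's inequality (equivalently, the power-mean inequality) to the convex map $x\mapsto x^p$ with uniform weights, which is exactly where the dimension-dependent factor $N^{1-p}$ enters. Both steps are sound, the degenerate all-zero case is handled, and the directions of the inequalities are tracked correctly across the concave/convex transition at $p=1$, so your write-up serves as a complete self-contained substitute for the citation.
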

	
	\begin{Lemma}\label{lemma:sign}
		\footnote{\label{note1} The proof is a simple consequence of the fact that the function $\sign(x)$ is odd symmetric about zero.}Let $\mathcal G$ be a graph consisting of $N$ nodes and $x_i\in \mathbb R^d$ for $i\in \{1,2,\cdots, N\}$ and $\mathcal N_i$ denotes the in-neighbors of node $i$. Then, $\sum_{i = 1}^N\sum_{j \in \mathcal N_i}\sign(x_i-x_j) = 0.$
	\end{Lemma}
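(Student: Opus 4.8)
The plan is to exploit the antisymmetry of the summand together with the symmetry of the underlying undirected graph. First I would rewrite the neighbor sum in terms of the adjacency matrix: using the convention $\sum_{j\in\mathcal{N}_i}(\cdot)=\sum_{j=1}^N a_{ij}(\cdot)$ recorded in the Notation subsection, the quantity of interest becomes $S \triangleq \sum_{i=1}^N\sum_{j=1}^N a_{ij}\sign(x_i-x_j)$, a sum over \emph{ordered} pairs $(i,j)$.

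The core observation is that each ordered pair $(i,j)$ is accompanied by its mirror pair $(j,i)$, and their contributions cancel. Concretely, I would relabel the summation indices $i\leftrightarrow j$ to write $S=\sum_{i,j} a_{ji}\sign(x_j-x_i)$. Since $\mathcal{G}$ is undirected its adjacency matrix is symmetric, $a_{ji}=a_{ij}$, and since $\sign$ is odd, $\sign(x_j-x_i)=-\sign(x_i-x_j)$. Substituting these two identities yields $S=-\sum_{i,j} a_{ij}\sign(x_i-x_j)=-S$. Hence $2S=0$ and therefore $S=0$, which is the claim.

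The only points needing care are bookkeeping rather than genuine obstacles: one should confirm that the diagonal terms are harmless (for $i=j$ the summand is $a_{ii}\sign(0)=0$, using $\sign(0)=0$, regardless of whether self-loops are permitted), and one must invoke the symmetry $a_{ij}=a_{ji}$, which is precisely where the undirectedness of $\mathcal{G}$ enters; for a genuinely directed graph the statement would fail in general. Equivalently, and perhaps more transparently, I could group the full double sum directly into unordered pairs $\{i,j\}$ and observe that each such pair contributes $a_{ij}\big(\sign(x_i-x_j)+\sign(x_j-x_i)\big)=0$; this avoids the relabeling step but rests on the same two facts. I expect the index-relabeling argument to be the cleanest to present.
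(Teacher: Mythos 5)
Your proof is correct and is exactly the argument the paper has in mind: the paper's entire ``proof'' is a footnote stating the result is a simple consequence of the odd symmetry of $\sign(\cdot)$, and your relabeling of ordered pairs $(i,j)\leftrightarrow(j,i)$ using $a_{ij}=a_{ji}$ to get $S=-S$ is the natural elaboration of that remark. Your observation that undirectedness (symmetry of $A$) is essential --- something the lemma's ``in-neighbors'' phrasing obscures but the paper's standing convention of undirected graphs supplies --- is a worthwhile clarification the paper leaves implicit.
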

	
	\begin{Lemma}\label{lemma:eij_half}
		\footnoteref{note1}Let $w:\mathbb R^d\rightarrow \mathbb R^d$ be an odd function, i.e., $w(x) = -w(-x)$ for all $x\in \mathbb R^d$ and let the graph $\mathcal G = (A,\mathcal V)$ be undirected. Let $\{x_i\}$ and $\{e_i\}$ be the sets of vectors with $i\in \mathcal V$ and $x_{ij} \triangleq x_i-x_j$ and $e_{ij} \triangleq e_i-e_j$. Then, the following holds\vspace{-.5em}
		{\small\begin{align}\label{eq:f_ij_eij}
				\sum\nolimits_{i,j= 1}^Na_{ij}e_i^Tw(x_{ij}) = \frac{1}{2}\!\sum\nolimits_{i,j= 1}^Na_{ij}e_{ij}^Tw(x_{ij}).
		\end{align}}\normalsize
	\end{Lemma}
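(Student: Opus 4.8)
The plan is to start from the right-hand side, substitute $e_{ij} = e_i - e_j$, and exploit the combined symmetry coming from (i) the undirectedness of $\mathcal G$ and (ii) the oddness of $w$. Writing the right-hand side as
\[
\frac{1}{2}\sum_{i,j=1}^N a_{ij}(e_i - e_j)^T w(x_{ij}) = \frac{1}{2}\sum_{i,j=1}^N a_{ij}e_i^T w(x_{ij}) - \frac{1}{2}\sum_{i,j=1}^N a_{ij}e_j^T w(x_{ij}),
\]
I would aim to show that the two resulting double sums are negatives of one another, so that the right-hand side collapses to a single copy of the left-hand side.

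The key step is a relabeling of the summation indices in the second double sum. Swapping the dummy indices $i\leftrightarrow j$ turns $\sum_{i,j=1}^N a_{ij}e_j^T w(x_{ij})$ into $\sum_{i,j=1}^N a_{ji}e_i^T w(x_{ji})$. At this point I invoke the two hypotheses: undirectedness gives $a_{ji} = a_{ij}$, while $x_{ji} = -x_{ij}$ together with the oddness of $w$ gives $w(x_{ji}) = -w(x_{ij})$. Substituting these two identities shows that the second double sum equals $-\sum_{i,j=1}^N a_{ij}e_i^T w(x_{ij})$.

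Combining the two pieces, the right-hand side becomes $\frac{1}{2}\sum_{i,j=1}^N a_{ij}e_i^T w(x_{ij}) + \frac{1}{2}\sum_{i,j=1}^N a_{ij}e_i^T w(x_{ij}) = \sum_{i,j=1}^N a_{ij}e_i^T w(x_{ij})$, which is exactly the left-hand side, completing the argument. There is no genuine obstacle here: the entire proof is a symmetry and relabeling computation, and the only point requiring care is correctly tracking the single minus sign produced by the oddness of $w$ and ensuring that the undirectedness hypothesis is used to match the adjacency weights $a_{ij}$ after the index swap.
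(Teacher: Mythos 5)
Your proof is correct and is exactly the argument the paper has in mind: the paper only sketches this lemma in a footnote as ``a simple consequence of the fact that $\sign(x)$ is odd symmetric about zero,'' and your index-swap computation, using $a_{ji}=a_{ij}$ from undirectedness and $w(x_{ji})=-w(x_{ij})$ from oddness, is precisely the formalization of that remark. No gaps; the sign bookkeeping is handled correctly.
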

	
	\begin{Lemma}[\cite{mesbahi2010graph}]\label{lemma:Laplacian}
		Let $\mathcal G = (A,\mathcal V)$ be an undirected, connected graph. Let $L_A = [l_{ij}]\in\mathbb{R}^{N\times N}$ be its Laplacian matrix defined as $
		l_{ij} = \left\{
		\begin{array}{cc}
		\sum\limits_{k=1,k\neq i}^{N}a_{ik}, & i=j\\
		-a_{ij}, & i\neq j
		\end{array}.
		\right.    
		$
		Then, Laplacian $L_A$ has following properties:\\
		1) $L_A 1_N = 0_N$ and $\Lambda_2(L_A) > 0$.\\
		2) $x^TL_Ax=0.5\sum_{i,j=1}^Na_{ij}(x_j-x_i)^2$.
	\end{Lemma}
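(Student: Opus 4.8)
The plan is to prove the quadratic-form identity (property 2) first, since it immediately yields the positive semidefiniteness required for the spectral claim in property 1. Property 2 is a direct expansion: writing $x^T L_A x = \sum_{i,j} x_i l_{ij} x_j$ and substituting the diagonal and off-diagonal entries of $L_A$ gives $\sum_i \big(\sum_{k\neq i} a_{ik}\big) x_i^2 - \sum_{i\neq j} a_{ij} x_i x_j$. Expanding $\tfrac12 \sum_{i,j} a_{ij}(x_j-x_i)^2$ and using the symmetry $a_{ij}=a_{ji}$ of the undirected graph to merge the two quadratic sums reproduces exactly this expression, so the identity follows.

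For property 1, the first claim $L_A 1_N = 0_N$ is immediate from the row-sum-zero structure of the Laplacian: the $i$-th entry of $L_A 1_N$ equals $\sum_{k\neq i} a_{ik} - \sum_{j\neq i} a_{ij} = 0$, which simultaneously certifies that $0$ is an eigenvalue with eigenvector $1_N$.

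The substantive part is $\Lambda_2(L_A) > 0$, which I would handle in two steps. From property 2, each summand $a_{ij}(x_j-x_i)^2 \geq 0$, so $x^T L_A x \geq 0$ for all $x$; hence $L_A \succeq 0$ and, being symmetric, has only nonnegative eigenvalues with $0$ the smallest. The key step — and the one I expect to be the main obstacle, since it is precisely where connectedness enters — is to show that the zero eigenvalue is simple, i.e. $\ker L_A = \text{span}(1_N)$. If $x^T L_A x = 0$, then every term $a_{ij}(x_j-x_i)^2$ vanishes, forcing $x_i = x_j$ across every edge of $\mathcal G$. Connectedness then lets me join any two nodes by a path and propagate these equalities along it, concluding that $x$ is constant, i.e. $x \in \text{span}(1_N)$. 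Thus the kernel is one-dimensional, the zero eigenvalue has multiplicity one, and therefore the next eigenvalue $\Lambda_2(L_A)$ is strictly positive.
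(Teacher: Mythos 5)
Your proof is correct. The paper does not prove this lemma at all — it is quoted from the cited reference \cite{mesbahi2010graph} — and your argument (direct expansion for the quadratic form, row-sum structure for $L_A 1_N = 0_N$, and positive semidefiniteness plus a connectedness/path argument showing $\ker L_A = \mathrm{span}(1_N)$ so that the zero eigenvalue is simple) is exactly the standard textbook proof of this result.
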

	
	\vspace{-1.5em}\subsection{Problem formulation}\label{subsec:prob_form}
	This work concerns with finding optimal power dispatch from a network of $N$ generators in a smart grid, under load balance (equality) and generation (inequality) constraints. Let $C_i(\cdot)$ be the cost associated with power generation for the $i^\text{th}$ generator. The traditional ED problem is described as~\cite{chen2016distributed}:\vspace{-.5em}
	{\small\begin{align}\label{eq:EDP}
			\min\limits_{\{P_i\}} \ \ \ \ &\sum\limits_{i=1}^{N}\underbrace{\alpha_iP_i^2 + \beta_iP_i + \gamma_i}_{C_i(P_i)}, \nonumber \\
			\text{subject to} \ \  &\sum\limits_{i=1}^{N}P_i = \sum\limits_{j=1}^{m}P_{\text{L}_j} = P_\text{tot},  \\
			&P_i^\text{min} \leq P_i \leq P_i^\text{max}, \nonumber
	\end{align}}\normalsize
	where $P_i, P_i^\text{min}$ and $P_i^\text{max}$ denote the power dispatched, minimum generation capability and maximum generation capability of the $i^\text{th}$ generator, respectively. $P_\text{tot}$ is the total power demanded by a network of $m$ loads, whereas $P_{\text{L}_j}$ indicates power requirement from the $j^\text{th}$ load. Here, $\alpha_i, \beta_i, \gamma_i >0$ are the cost coefficients associated with the $i^\text{th}$ generator. The equality constraint ensures that the total power generated by all the sources meets the total load power requirement.
	
	\vspace{-1em}\subsection{Optimal solution without generation constraints}\label{subsec:without}
	To gain relevant insights into the role of incremental cost, we first consider the problem of ED without generation constraints. The Lagrangian $L(\cdot)$ associated with ED problem in this case can be formulated as:
	{\small\begin{align}\label{eq:Lagrangian_without}
			L(\{P_i\},\lambda) = \sum\nolimits_{i=1}^{N}C_i(P_i) + \lambda\left(P_\text{tot}-\sum\nolimits_{i=1}^{N}P_i\right),
	\end{align}}\normalsize
	where $\lambda$ is the incremental cost or the Lagrange multiplier associated with the equality constraint. Let $\{P_i^{*}\}$ and $\lambda^*$ denote the optimal dispatch and incremental cost, respectively. Then the first-order condition of optimality yields:
	{\small\begin{align}\label{eq:P_star_without}
			P_i^* &= \frac{\lambda^*-\beta_i}{2\alpha_i}\geq 0, \ \ i=1,2,\dots,N.
	\end{align}}\normalsize
	The optimal incremental cost $\lambda^*$ can be obtained from the equality constraint as:
	{\small\begin{align}\label{eq:lambda_without}
			\lambda^* = \left(P_\text{tot}+\sum\nolimits_{i=1}^N\frac{\beta_i}{2\alpha_i}\right)\Big/\left(\sum\nolimits_{i=1}^N\frac{1}{2\alpha_i}\right).
	\end{align}}\normalsize
	In a distributed setting, different sources seek to estimate optimal incremental cost $\lambda^*$. Once the optimal incremental cost is known, optimal power dispatch for each generator can be obtained using \eqref{eq:P_star_without}.
	
	\vspace{-1em}\subsection{Optimal solution with generation constraints}\label{subsec:with}
	When generation limits are considered, then the optimal dispatch $\{P_i^*\}$ and incremental cost $\lambda^*$ satisfy the following relationship :\vspace{-.5em}
	{\small\begin{align}\label{eq:P_star_with}
			\left\{
			\begin{array}{cl}
				2\alpha_iP_i^* + \beta_i = \lambda^*, & \text{for} \ P_i^\text{min}<P_i^*<P_i^\text{max}, \\
				2\alpha_iP_i^* + \beta_i < \lambda^*, & \text{for} \ P_i^*=P_i^\text{max}, \\
				2\alpha_iP_i^* + \beta_i > \lambda^*, & \text{for} \ P_i^*=P_i^\text{min}.
			\end{array}
			\right.
	\end{align}}\normalsize
	If $P_i^\text{max}\to\infty$ and $P_i^\text{min}\to0$ for all $i$, i.e., if there are no generation constraints, then the optimal incremental cost satisfies the equality constraint in \eqref{eq:P_star_with}, which is identical to \eqref{eq:P_star_without} for the uncapacitated case. However, in the presence of generation constraints, \eqref{eq:P_star_without} does not provide the correct optimal solution for \eqref{eq:EDP}. 
	However, optimal incremental costs for the uncapacitated ED problem and \eqref{eq:EDP} are related as follows. Let $\Theta$ be the set of generators for which saturated optimal dispatch values, i.e., $P_i^*=P_i^\text{min}$ or $P_i^*=P_i^\text{max}$ for all $i\in\Theta$. Then, from \eqref{eq:P_star_with}, it follows that:\vspace{-.5em}
	{\small\begin{align}\label{eq:lambda_star_with}
			\lambda^* &= 2\alpha_iP_i^* + \beta_i, \ \ i\notin\Theta, \nonumber \\
			\Rightarrow \lambda^* &= \tilde{\lambda}^* + \dfrac{\sum_{i\in\Theta}\left(\frac{\tilde{\lambda}^*-2\alpha_iP_i-\beta_i}{2\alpha_i}\right)}{\sum_{i\notin\Theta}\frac{1}{2\alpha_i}},
	\end{align}}\normalsize
	where $\tilde{\lambda}^*=\dfrac{P_\text{tot}+\sum_{i=1}^N\frac{\beta_i}{2\alpha_i}}{\sum_{i=1}^N\frac{1}{2\alpha_i}}$ is the incremental cost for a related ED problem without generation constraints \eqref{eq:lambda_without}  (see, e.g., \cite{chen2016distributed, feng2017finite} for more details). This relationship between $\tilde{\lambda}^*$ and $\lambda^*$ is utilized in the main algorithm proposed in the paper to address the ED problem with generation constraints.
	
	\vspace{-.5em}\section{Distributed FxTS algorithm}\label{sec:without}
	\subsection{Without generation constraints}
	We first present our main results on solving distributed ED problem without generation constraints in a fixed time. The approach is based on designing a fixed-time consensus protocol on incremental costs $\{\lambda_i\}$, such that for the average consensus, \eqref{eq:P_star_without} is satisfied. Any node at which several components of the power system, such as generator and loads are connected, is referred as a \emph{bus} in electrical parlance. To this end, we make the following assumptions on the communication topology.
	\vspace{-.5em}
	\begin{Assumption}
		Communication topology between the generator buses $A(t)$ is connected and undirected for all $t\geq 0$.
	\end{Assumption}\vspace{-1em}
	\begin{Assumption}
		Each generator bus can exchange information only with its neighboring bus.
	\end{Assumption}

	% \textbf{WHAT DOES "BETWEEN" TWO GENERATOR BUSES MEAN HERE? THE NOISE IS ONLY ACTING AT THE INDIVIDUAL NODE $i$.I THINK THE EXPLANATION GIVEN AFTER (10) IS SUFFICIENT AND CORRECT. MAYBE WE CAN SIMPLY MOVE THIS ASSUMPTION AFTER THAT TEXT, AND SAY THAT THE NOISE HAS THESE PROPERTIES, AGAIN, INSTEAD OF TRYING TO SAY WHAT THAT NOISE REPRESENTS OR WHERE IT'S COMING FROM.}
	% \noindent\textit{\bf Assumption 1}: Communication topology between the generator buses $A(t)$ is connected and undirected at all times.\\
	% \noindent\textit{\bf Assumption 2}: Each generator bus can exchange information only with its neighboring bus. Furthermore, uncertainty arising from noisy communication between two generator buses is zero-mean and uniformly bounded.
	
	\noindent The active power $P_i$ for the $i^\text{th}$ generator is updated as:\vspace{-.5em}
	{\small\begin{align}\label{eq:Pi_dyn}
			\dot P_i(t) &=  p\sum_{j\in\mathcal{N}_i(t)}\big(\sgn(\lambda_j(t)-\lambda_i(t))+\sgn^{\mu_1}(\lambda_j(t)-\lambda_i(t)) \nonumber\\
			& \quad + \sgn^{\mu_2}(\lambda_j(t)-\lambda_i(t))\big) + \omega_i(t),
	\end{align}}\normalsize 
	with $p>0, 0<\mu_1<1, \mu_2>1$ and $P_i(0) = \sum_{k=1}^md_{i_k}P_{\text{L}_k}$. Constants $\mu_1, \mu_2$ are chosen such that the functions $\sgn^{\mu_1}(\cdot)$ and $\sgn^{\mu_2}(\cdot)$ are odd in their arguments. The function $\omega_i:\mathbb R_+\rightarrow \mathbb R$ models the uncertainty arising at the $i^\text{th}$ bus during computation of active power dispatch using \eqref{eq:Pi_dyn}. We make the following assumption on the noise $\omega_i$.
	\vspace{-.5em}
	
	\begin{Assumption}
		Additive noise $\omega_i$ is zero-mean and uniformly bounded for each $i = \{1, 2, \dots, N\}$.
	\end{Assumption}
	\vspace{-.5em}
	Note that the communication topology is allowed to vary with time, and thus the neighborhood set $\mathcal{N}_i(\cdot)$ is a function of time. It is assumed that there are $m$ load buses in the power system, and the quantity $P_{\text{L}_k}$ denotes the power demanded by the $k^\text{th}$ load bus, and $d_{i_k}$ represents the binary association between the generator bus $i$ and the load bus $k$, defined as:\vspace{-.5em}
	{\small\begin{equation*}
			d_{i_k} = \left\{
			\begin{array}{ll}
				1, & \text{if buses $i$ and $k$ are neighbors},\\
				0, & \text{otherwise}.
			\end{array}
			\right.
	\end{equation*}}\vspace{-1em}
	
	\begin{Remark}\label{rem1}
		Inclusion of $\{d_{i_k}\}$ ensures that any load bus is required to communicate its power demand only with its nearest generator bus, i.e., $\sum_{i=1}^Nd_{i_k}=1$ for all $k$. Therefore, $\sum_{i=1}^N\sum_{k=1}^md_{i_k}P_{\textup{L}_k} = P_\textup{tot}$.
		Furthermore, from \eqref{eq:Pi_dyn}, Assumption~2 and Lemma \ref{lemma:sign}, it can be shown that\vspace{-.5em}
		\begin{align*}
			\mathbb{E}\left[\sum\limits_{i=1}^N\dot{P}_i(t)\right] = 0 \ \Rightarrow \mathbb{E}\left[\sum\limits_{i=1}^N{P}_i(t)\right] = \sum\limits_{i=1}^N{P}_i(0) \triangleq P_\textup{tot}.
		\end{align*}
		Thus the update law \eqref{eq:Pi_dyn} ensures that the load balance constraint is satisfied at all times.
	\end{Remark}
	In what follows, we omit the time-variable $t$. The incremental cost associated with generator bus $i$ is updated as:
	{\small\begin{align}\label{eq:lambda_dyn}
			\frac{\dot\lambda_i}{2\alpha_i} &  = \dot P_i + \sgn^{\nu_1}\left(P_i-\frac{\lambda_i-\beta_i}{2\alpha_i}\right) + \sgn^{\nu_2}\left(P_i-\frac{\lambda_i-\beta_i}{2\alpha_i}\right)\nonumber \\ 
			&= p\sum\limits_{j=1}^N\!a_{ij}\big[\sgn(\lambda_j\!-\!\lambda_i)+\sgn^{\mu_1}(\lambda_j\!-\!\lambda_i) + \sgn^{\mu_2}(\lambda_j\!-\!\lambda_i)\big]\nonumber\\
			&\quad  \!+ \!\sgn^{\nu_1}\!\left(\!P_i\!-\!\frac{\lambda_i-\beta_i}{2\alpha_i}\!\right) \!+\sgn^{\nu_2}\!\left(\!P_i\!-\!\frac{\lambda_i-\beta_i}{2\alpha_i}\!\right) \!+ \!\omega_i,
	\end{align}}\normalsize
	where $0<\nu_1<1$ and $\nu_2>1$. As before, constants $\nu_1, \nu_2$ are chosen such that the functions $\sgn^{\nu_1}(\cdot)$ and $\sgn^{\nu_2}(\cdot)$ are odd in their arguments. Note that the update laws \eqref{eq:Pi_dyn} and \eqref{eq:lambda_dyn} for scheduled dispatch values and incremental costs only require information from the local bus and its neighboring buses. Thus, the proposed approach is fully distributed. We now show that under the proposed update laws, $\{\lambda_i\}$ and $\{P_i\}$ converge to their optimal values in a fixed-time even in the presence of additive uncertainty $\omega_i$.
	
	\begin{Theorem}\label{thm1}
		Let the update equations for scheduled dispatch and incremental costs be given by \eqref{eq:Pi_dyn}-\eqref{eq:lambda_dyn}. Then, there exists $T_1<\infty$ such that for all $t\geq T_1$, $P_i(t) = \dfrac{\lambda_i(t)-\beta_i}{2\alpha_i}$, for all $\lambda_i(0)\in \mathbb R$, $i\in \{1,2,\cdots,N\}$.
	\end{Theorem}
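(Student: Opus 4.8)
The plan is to track, for each generator, the mismatch variable $e_i \triangleq P_i - \frac{\lambda_i - \beta_i}{2\alpha_i}$, since the claim is precisely that every $e_i$ vanishes after a fixed time. First I would compute $\dot e_i = \dot P_i - \frac{\dot\lambda_i}{2\alpha_i}$. The crucial observation is that the first line of \eqref{eq:lambda_dyn} reads exactly $\frac{\dot\lambda_i}{2\alpha_i} = \dot P_i + \sgn^{\nu_1}(e_i) + \sgn^{\nu_2}(e_i)$, so the entire consensus coupling $p\sum_j a_{ij}[\cdots]$ and the additive disturbance $\omega_i$ enter $\dot\lambda_i/(2\alpha_i)$ and $\dot P_i$ identically and cancel upon subtraction. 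This leaves the decoupled, disturbance-free scalar dynamics
\[
\dot e_i = -\sgn^{\nu_1}(e_i) - \sgn^{\nu_2}(e_i), \quad i = 1,\dots,N.
\]
Because $\nu_1,\nu_2>0$, the right-hand side is continuous, so existence of solutions is immediate and no expectation argument is needed; the noise robustness claimed in the theorem is in fact an exact pointwise cancellation rather than an averaged statement.

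Next I would take the aggregate Lyapunov candidate $V = \frac{1}{2}\sum_{i=1}^N e_i^2$, which is positive definite in $e = (e_1,\dots,e_N)$. Using the identity $e_i\,\sgn^{\nu}(e_i) = |e_i|^{\nu+1}$, differentiation gives $\dot V = -\sum_{i=1}^N |e_i|^{\nu_1+1} - \sum_{i=1}^N |e_i|^{\nu_2+1}$. To recast this in the form required by Lemma \ref{lemma:FxTS}, I would set $t_i = e_i^2 \geq 0$, so that $|e_i|^{\nu_k+1} = t_i^{(\nu_k+1)/2}$ and $\sum_i t_i = 2V$. Since $0<\nu_1<1$ yields exponent $(\nu_1+1)/2 \in (0,1)$ and $\nu_2>1$ yields exponent $(\nu_2+1)/2>1$, the two branches of Lemma \ref{lemma:ineq} apply to the respective sums and produce
\[
\dot V \leq -(2V)^{\frac{\nu_1+1}{2}} - N^{\frac{1-\nu_2}{2}}(2V)^{\frac{\nu_2+1}{2}}.
\]

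Finally I would invoke Lemma \ref{lemma:FxTS} with $a = 2^{(\nu_1+1)/2}>0$, $p = (\nu_1+1)/2 \in (0,1)$, $b = N^{(1-\nu_2)/2}\,2^{(\nu_2+1)/2}>0$, and $q = (\nu_2+1)/2 > 1$, concluding that $e \to 0$ in a fixed time bounded by $T_1 \leq \frac{1}{a(1-p)} + \frac{1}{b(q-1)}$, a bound independent of $e(0)$ and hence of the initial costs $\lambda_i(0)$. Consequently $e_i(t)=0$, i.e. $P_i(t) = \frac{\lambda_i(t)-\beta_i}{2\alpha_i}$, for all $t \geq T_1$ and all $i$. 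I expect the only genuine obstacle to be the opening step: recognizing that the update \eqref{eq:lambda_dyn} is designed so that feeding the full $\dot P_i$ (disturbance included) back into the $\lambda_i$-dynamics makes the per-node error autonomous and graph-independent. Once that structural cancellation is seen, the remainder is a routine chaining of Lemma \ref{lemma:ineq} into Lemma \ref{lemma:FxTS}.
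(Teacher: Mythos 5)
Your proposal is correct and follows essentially the same route as the paper's proof: the same error variable $e_i = P_i - \frac{\lambda_i-\beta_i}{2\alpha_i}$, the same Lyapunov function $V = \frac{1}{2}\sum_{i=1}^N e_i^2$, the same application of Lemma~\ref{lemma:ineq} followed by Lemma~\ref{lemma:FxTS}, yielding the identical settling-time bound. Your only addition --- making explicit that the consensus coupling and the disturbance $\omega_i$ cancel exactly in $\dot e_i$, so the error dynamics are decoupled and noise-free --- is implicit in the paper's computation of $\dot V$ and is a fair clarification rather than a different argument.
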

	\begin{proof}
		Let $e_i = P_i-\frac{\lambda_i-\beta_i}{2\alpha_i}$ for all $i\in\{1,\dots,N\}$ and $V = \dfrac{1}{2}\sum_{i=1}^Ne_i^2$ whose time derivative along \eqref{eq:Pi_dyn}-\eqref{eq:lambda_dyn} reads\vspace{-.5em}{\small
			\begin{align*}
				\dot V & = \sum_{i = 1}^Ne_i\left(\dot P_i-\frac{\dot \lambda_i}{2\alpha_i}\right) = -\sum_{i = 1}^N|e_i|^{1+\nu_1} -\sum_{i = 1}^N|e_i|^{1+\nu_2}\\
				& \overset{\eqref{eq:ineq}}{\leq} -\left(\sum\nolimits_{i = 1}^N|e_i|^2\right)^{\frac{1+\nu_1}{2}} -\frac{1}{N^{\frac{\nu_2-1}{2}}}\left(\sum\nolimits_{i = 1}^N|e_i|^2\right)^{\frac{1+\nu_2}{2}}\\
				& =  -2^\frac{1+\nu_1}{2}V^\frac{1+\nu_1}{2}-\frac{2^\frac{1+\nu_2}{2}}{N^{\frac{\nu_2-1}{2}}}V^\frac{1+\nu_2}{2}.
		\end{align*}}\normalsize
		Hence, per Lemma \ref{lemma:FxTS}, there exists $T_1<\infty$ satisfying\vspace{-.5em}
		{\small\begin{align}\label{eq:T1_bound}
				T_1\leq \frac{2}{2^\frac{1+\nu_1}{2}(1-\nu_1)}+\frac{2N^\frac{\nu_2-1}{2}}{2^\frac{1+\nu_2}{2}(\nu_2-1)},
		\end{align}}\normalsize
		such that for all $t\geq T_1$, $V(t) = 0$, i.e., $P_i(t) = \frac{\lambda_i(t)-\beta_i}{2\alpha_i}$ for all $i\in \{1, 2, \cdots, N\}$, independent of $\{\lambda_i(0)\}$.
	\end{proof}
	The following theorem shows that the buses reach consensus on the incremental costs $\{\lambda_i\}$ in a fixed-time in the presence of additive uncertainty $\omega_i$, resulting in generator powers attaining their optimal values, per discussion in Section~\ref{subsec:without}.
	
	\begin{Theorem}[{\bf Fixed-time consensus}]\label{thm2}
		Let $p\geq 2\Delta\sqrt{\frac{N\alpha_\text{max}}{\Lambda_2(L_A)\alpha_\text{min}}}$, where $\alpha_\text{max} = \max_i\alpha_i$, $\alpha_\text{min}=\min_i\alpha_i$, $\Gamma = 1\big/ \sum_{i = 1}^N\frac{1}{2\alpha_i}$ and $\Delta = \sup_{t\geq 0}\max_{i}|\omega_i(t)-\frac{\Gamma}{2\alpha_iN}\sum_{j=1}^N\omega_j(t)|$. Then, under the effect of update laws described by \eqref{eq:Pi_dyn}-\eqref{eq:lambda_dyn}, there exists $T_2<\infty$, such that for all $t\geq T_1+T_2$, $\lambda_i(t) = \lambda_j(t)$ for all $i\neq j$, $i,j\in\{1,\dots,N\}$, where $T_1$ satisfies \eqref{eq:T1_bound}. Consequently, under the effect of update laws \eqref{eq:Pi_dyn}-\eqref{eq:lambda_dyn}, $P_i$ converge to the optimal solution of \eqref{eq:EDP} without generation constraints within a fixed time $t\leq T_1+T_2$, even in the presence of additive uncertainty $\omega_i$.
	\end{Theorem}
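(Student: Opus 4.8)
The plan is to use Theorem~\ref{thm1} to collapse \eqref{eq:lambda_dyn} into a pure consensus recursion and then run a weighted disagreement Lyapunov argument through Lemma~\ref{lemma:FxTS}. First I would invoke Theorem~\ref{thm1}: for every $t\geq T_1$ the error $e_i=P_i-\frac{\lambda_i-\beta_i}{2\alpha_i}$ is identically zero, and since its dynamics are the noise-free autonomous law $\dot e_i=-\sgn^{\nu_1}(e_i)-\sgn^{\nu_2}(e_i)$ (the $\omega_i$ and coupling terms cancel by construction of \eqref{eq:lambda_dyn}), it remains at zero thereafter. Hence for $t\geq T_1$ the $\sgn^{\nu_1},\sgn^{\nu_2}$ terms vanish and, writing $\phi(z)=\sgn(z)+\sgn^{\mu_1}(z)+\sgn^{\mu_2}(z)$ (odd, since each exponent is chosen odd), the incremental costs obey $\frac{\dot\lambda_i}{2\alpha_i}=p\sum_{j\in\mathcal N_i}\phi(\lambda_j-\lambda_i)+\omega_i$.

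Next I would set the weighted centroid $\bar\lambda=\Gamma\sum_i\frac{\lambda_i}{2\alpha_i}$ and disagreement $\delta_i=\lambda_i-\bar\lambda$, which satisfies $\sum_i\frac{\delta_i}{2\alpha_i}=0$, and take $V_2=\frac12\sum_{i=1}^N\frac{1}{2\alpha_i}\delta_i^2$, positive definite on the disagreement and zero exactly at consensus. Differentiating, the $\dot{\bar\lambda}$ contribution cancels because $\sum_i\frac{\delta_i}{2\alpha_i}=0$, the weight $\frac{1}{2\alpha_i}$ cancels the $2\alpha_i$ in the dynamics, and (using $\lambda_j-\lambda_i=\delta_j-\delta_i$ and applying Lemma~\ref{lemma:eij_half} to the odd map $\phi$ on the undirected graph) the coupling term becomes $-\frac{p}{2}\sum_{i,j}a_{ij}\big(|\lambda_i-\lambda_j|+|\lambda_i-\lambda_j|^{1+\mu_1}+|\lambda_i-\lambda_j|^{1+\mu_2}\big)$. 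The residual noise term $\sum_i\delta_i\omega_i$ I would rewrite by subtracting the admissible constant $\tfrac{\Gamma}{N}\sum_j\omega_j$ scaled by $\frac{1}{2\alpha_i}$ (admissible precisely because $\sum_i\frac{\delta_i}{2\alpha_i}=0$), obtaining $\sum_i\delta_i\big(\omega_i-\frac{\Gamma}{2\alpha_iN}\sum_j\omega_j\big)\leq\Delta\sum_i|\delta_i|$, which is where $\Delta$ enters.

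The crux is to show the linear $\sgn$ term absorbs this noise under the stated bound on $p$. Since $a_{ij}\in\{0,1\}$ and $\ell_1\geq\ell_2$, one has $\sum_{i,j}a_{ij}|\lambda_i-\lambda_j|\geq\big(\sum_{i,j}a_{ij}(\lambda_i-\lambda_j)^2\big)^{1/2}=\big(2\lambda^TL_A\lambda\big)^{1/2}\geq\big(2\Lambda_2(L_A)\big)^{1/2}\|\lambda-\hat\lambda 1_N\|$ by Lemma~\ref{lemma:Laplacian}, whereas the norm equivalence $4\alpha_{\min}V_2\leq\|\delta\|_2^2\leq4\alpha_{\max}V_2$ gives $\sum_i|\delta_i|\leq\sqrt N\,\|\delta\|_2\leq2\sqrt{N\alpha_{\max}}\,V_2^{1/2}$, so the noise is bounded by $2\Delta\sqrt{N\alpha_{\max}}\,V_2^{1/2}$. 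Matching the two yields exactly $p\geq2\Delta\sqrt{N\alpha_{\max}/(\Lambda_2(L_A)\alpha_{\min})}$, leaving $\dot V_2\leq-\frac{p}{2}\sum_{i,j}a_{ij}\big(|\lambda_i-\lambda_j|^{1+\mu_1}+|\lambda_i-\lambda_j|^{1+\mu_2}\big)$. I expect \emph{this} domination step to be the main obstacle: reconciling the weighted centroid $\bar\lambda$ with the arithmetic mean $\hat\lambda$ that appears in the Laplacian bound, while keeping the constant sharp enough to reproduce the factor in the $p$-condition, is the delicate part.

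Finally I would apply Lemma~\ref{lemma:ineq} with $t_{ij}=a_{ij}(\lambda_i-\lambda_j)^2$: its first inequality (exponent $\frac{1+\mu_1}{2}<1$) and second inequality (exponent $\frac{1+\mu_2}{2}>1$), together with Lemma~\ref{lemma:Laplacian} and the $V_2$-norm equivalence, convert the two remaining sums into $c_1V_2^{(1+\mu_1)/2}$ and $c_2V_2^{(1+\mu_2)/2}$ with $c_1,c_2>0$. This gives $\dot V_2\leq-c_1V_2^{(1+\mu_1)/2}-c_2V_2^{(1+\mu_2)/2}$ with exponents straddling $1$, so Lemma~\ref{lemma:FxTS} furnishes $T_2<\infty$ independent of the initial data with $V_2(t)=0$, i.e.\ $\lambda_i(t)=\lambda_j(t)$, for all $t\geq T_1+T_2$. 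Optimality then follows: consensus with $e_i=0$ forces $2\alpha_iP_i+\beta_i$ equal across $i$, and Remark~\ref{rem1} preserves $\sum_iP_i=P_\text{tot}$ in expectation, so the common value coincides with $\tilde\lambda^\ast$ of \eqref{eq:lambda_without} and the $P_i$ attain the optimum of the unconstrained problem.
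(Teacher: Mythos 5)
Your proposal follows the paper's own proof essentially step for step: the same reduction of \eqref{eq:lambda_dyn} to \eqref{eq:lambda_dot_aft_T1} via Theorem~\ref{thm1}, the same weighted-centroid Lyapunov function $V_2=\frac12\sum_i\frac{1}{2\alpha_i}\delta_i^2$, the same splitting into a coupling term (handled by Lemma~\ref{lemma:eij_half} and Lemma~\ref{lemma:ineq}) and a noise term bounded by $2\Delta\sqrt{N\alpha_\text{max}}V_2^{1/2}$, and the same invocation of Lemma~\ref{lemma:FxTS}. In fact your bookkeeping is cleaner than the paper's: your $\bar\lambda=\Gamma\sum_i\frac{\lambda_i}{2\alpha_i}$ makes $\sum_i\frac{\delta_i}{2\alpha_i}=0$ hold exactly (the paper carries a spurious $1/N$), and your device of subtracting the constant $\frac{\Gamma}{N}\sum_j\omega_j$ scaled by $\frac{1}{2\alpha_i}$ reproduces the stated $\Delta$ correctly.

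The one step you leave open --- relating the arithmetic-mean deviation $\|\lambda-\hat\lambda 1_N\|$ coming out of the Laplacian bound to the weighted quantity $V_2$ --- is indeed the crux, and it is worth noting that the paper itself handles it incorrectly: it asserts $\sum_i\tilde\lambda_i=0$ and then uses $\tilde\lambda^TL_A\tilde\lambda\geq\Lambda_2(L_A)\|\tilde\lambda\|^2$, but orthogonality to $1_N$ fails for the weighted centroid unless all $\alpha_i$ are equal. Your approach closes cleanly, however, by using the variational characterization of the weighted centroid: since $\bar\lambda$ minimizes $c\mapsto\sum_i\frac{1}{2\alpha_i}(\lambda_i-c)^2$, one has $2V_2=\sum_i\tfrac{1}{2\alpha_i}\delta_i^2\leq\sum_i\tfrac{1}{2\alpha_i}(\lambda_i-\hat\lambda)^2\leq\tfrac{1}{2\alpha_\text{min}}\|\lambda-\hat\lambda 1_N\|^2$, i.e.\ $\|\lambda-\hat\lambda 1_N\|^2\geq 4\alpha_\text{min}V_2$. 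Combined with $\lambda^TL_A\lambda=\delta^TL_A\delta\geq\Lambda_2(L_A)\|\lambda-\hat\lambda 1_N\|^2$, your linear term satisfies $\frac{p}{2}\sum_{i,j}a_{ij}|\lambda_i-\lambda_j|\geq p\sqrt{2\Lambda_2(L_A)\alpha_\text{min}}\,V_2^{1/2}$, which dominates the noise bound $2\Delta\sqrt{N\alpha_\text{max}}\,V_2^{1/2}$ under the stated condition on $p$ with a factor $\sqrt{2}$ to spare (so you actually recover a slightly better constant than \eqref{eq:V1_dot-2}); the same substitution fixes the $\mu_1,\mu_2$ terms, and the rest of your argument, including the FxTS conclusion and the optimality claim via Remark~\ref{rem1}, goes through as written.
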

	\begin{proof}
		From Theorem \ref{thm1}, we obtain that for any $t\geq T_1$, $P_i(t) = \frac{\lambda_i(t)-\beta_i}{2\alpha_i}$ for all $i=1,\dots,N$. Thus, \eqref{eq:lambda_dyn} reduces to\vspace{-.5em}
		{\small\begin{align}\label{eq:lambda_dot_aft_T1}
				\frac{\dot\lambda_i}{2\alpha_i} &=  p\sum_{j \in \mathcal N_i}(\sgn(\lambda_j-\lambda_i)+\sgn^{\mu_1}(\lambda_j-\lambda_i) \nonumber\\
				&\quad + \sgn^{\mu_2}(\lambda_j-\lambda_i))+\omega_i, \ \ \text{for all} \ t\geq T_1.
		\end{align}}\normalsize
		In what follows, we will only consider trajectories of $\lambda_i(t)$ for $t\geq T_1$. Let $\bar{\lambda}$ denote the average of $\{\lambda_i\}$ weighted by the inverse of the corresponding cost coefficients $\{\alpha_i\}$, i.e., $\label{eq:lambda_bar}\bar \lambda = \dfrac{\Gamma}{N}\sum_{i=1}^N\dfrac{\lambda_i}{2\alpha_i}$, where $ \Gamma = 1\big/ \sum_{i = 1}^N\frac{1}{2\alpha_i}$. From Lemma \ref{lemma:sign} and \eqref{eq:lambda_dot_aft_T1}, it follows that $\dot{\bar\lambda} = \frac{\Gamma}{N}\sum_{i = 1}^N\frac{\dot \lambda_i}{2\alpha_i} = \frac{\Gamma}{N}\sum_{i = 1}^N\omega_i$.
		
		We now show that $\lambda_i(t)=\bar{\lambda}$ for all $t\geq T_1+T_2$, where $T_2~<~\infty$. To this end, we define the consensus error $\tilde{\lambda}_i = \lambda_i-\bar{\lambda}$, and consider the candidate Lyapunov function $V = \frac{1}{2}\sum_{i = 1}^N\frac{1}{2\alpha_i}\tilde\lambda_i^2$. Its time derivative along \eqref{eq:lambda_dot_aft_T1} reads\vspace{-.5em}
		\begin{align*}
			\dot V &= \sum\nolimits_{i = 1}^N\tilde\lambda_ip\sum\nolimits_{j=1}^Na_{ij}\big(\sgn(\lambda_j-\lambda_i) +\sgn^{\mu_1}(\lambda_j-\lambda_i)\\
			&\quad + \sgn^{\mu_2}(\lambda_j-\lambda_i)\big) + \sum\nolimits_{i = 1}^N\!\tilde\lambda_i\left(\omega_i-\frac{\Gamma}{2\alpha_iN}\sum\nolimits_{j = 1}^N\!\omega_j\!\right)\\
			&\quad  = \dot V_1 + \dot V_2,
		\end{align*}
		where $\dot V_1 = p\sum_{i,j= 1}^N\tilde{\lambda}_ia_{ij}(\sgn(\lambda_j-\lambda_i) +\sgn^{\mu_1}(\lambda_j-\lambda_i) + \sgn^{\mu_2}(\lambda_j-\lambda_i))$ and $\dot V_2 = \sum_{i = 1}^N\tilde\lambda_i(\omega_i-\frac{\Gamma}{2\alpha_iN}\sum_{j=1}^N\omega_j)$. Since $\omega_i$ and $\alpha_i$ are bounded, we can bound $\dot V_2$ as \vspace{-.5em}
		\begin{align*}
			\dot V_2 \leq \sum_{i=1}^N|\tilde\lambda_i|.\left|\omega_i-\frac{\Gamma}{2\alpha_iN}\sum_{j=1}^N\omega_j\right|\leq \Delta \sum_{i = 1}^N|\tilde\lambda_i| = \Delta\|\tilde \lambda\|_1,
		\end{align*}
		where $\Delta = \sup_{t\geq 0}\max_{i}|\omega_i(t)-\frac{\Gamma}{2\alpha_iN}\sum_{j=1}^N\omega_j(t)|$ and $\tilde\lambda = \begin{bmatrix}\tilde\lambda_1 & \cdots& \tilde\lambda_N\end{bmatrix}^T$. Hence, we have that \vspace{-.5em}
		{\small\begin{align}\label{eq:V2_dot}
				\dot V_2 \leq \Delta \|\tilde\lambda\|_1 \leq \Delta\sqrt{N}\|\tilde\lambda\|_2 \leq 2\Delta\sqrt{N\alpha_\text{max}}V^\frac{1}{2},
		\end{align}}\normalsize
		where $\alpha_\text{max} = \max_i\alpha_i$. Now, consider the term $\dot V_1$. Note that $\lambda_i-\lambda_j = \tilde\lambda_i-\tilde\lambda_j$ and denote $\tilde\lambda_{ij} = \tilde\lambda_i-\tilde\lambda_j$. Thus, the term $\dot{V}_1$ can be rewritten as:\vspace{-.5em}
		{\small\begin{align}\label{eq:V1_dot-1}
				\dot V_1 & = p\sum\nolimits_{i,j= 1}^N\tilde\lambda_ia_{ij}(\sgn(\tilde \lambda_{ji}) +\sgn^{\mu_1}(\tilde \lambda_{ji}) + \sgn^{\mu_2}(\tilde \lambda_{ji}))\nonumber\\
				& \overset{\eqref{eq:f_ij_eij}}{=} \frac{p}{2}\sum\nolimits_{i,j= 1}^N\tilde\lambda_{ij}a_{ij}(\sgn(\tilde \lambda_{ji}) +\sgn^{\mu_1}(\tilde \lambda_{ji}) + \sgn^{\mu_2}(\tilde \lambda_{ji}))\nonumber\\
				& \overset{\eqref{eq:ineq}}{\leq} -\frac{p}{2}\left(\!\sum\nolimits_{i,j=1}^Na_{ij}\tilde{\lambda}_{ij}^2\!\right)^{\!\frac{1}{2}} -\frac{p}{2}\left(\!\sum\nolimits_{i,j=1}^Na_{ij}\tilde{\lambda}_{ij}^2\!\right)^{\!\!\!\!\frac{1+\mu_1}{2}} \nonumber \\
				&\quad -\frac{p}{2N^{\mu_2-1}}\!\left(\!\sum\nolimits_{i,j=1}^Na_{ij}\tilde{\lambda}_{ij}^2\!\right)^{\!\!\!\!\frac{1+\mu_2}{2}}
		\end{align}}\normalsize
		Note that $\sum_{i=1}^N\tilde{\lambda}_i=0$ and $\sum_{i,j=1}^Na_{ij}\tilde{\lambda}_{ij}^2=2\tilde{\lambda}L_A\tilde{\lambda}$. More-over, from Lemma \ref{lemma:Laplacian}, we conclude that
		\[
		\quad 4\alpha_\text{min}\Lambda_2(L_A)V \quad \leq \quad 2\Lambda_2(L_A)\tilde{\lambda}^T\tilde{\lambda} \quad \leq  2\tilde{\lambda}L_A\tilde{\lambda},
		\]
		where $\alpha_\text{min}=\min_i\alpha_i$. Thus, \eqref{eq:V1_dot-1} can be rewritten as:
		{\small\begin{align}\label{eq:V1_dot-2}
				\dot V_1 & \leq -p(\alpha_\text{min}\Lambda_2(L_A)V)^\frac{1}{2} - p2^{\mu_1}(\alpha_\text{min}\Lambda_2(L_A)V)^\frac{1+\mu_1}{2}\nonumber\\
				& \quad - \frac{p2^{\mu_2}}{N^{\mu_2-1}}(\alpha_\text{min}\Lambda_2(L_A)V)^\frac{1+\mu_2}{2}.
		\end{align}}\normalsize
		On combining \eqref{eq:V2_dot} and \eqref{eq:V1_dot-2}, $\dot V$ can be bounded as:{\small
			\begin{align*}
				\dot V &\leq -\left(p\sqrt{\Lambda_2(L_A)\alpha_\text{min}}-2\Delta\sqrt{N\alpha_\text{max}}\right)V^\frac{1}{2}\\
				& \quad -\underbrace{p2^{\mu_1}(\Lambda_2(L_A)\alpha_\text{min})^\frac{1+\mu_1}{2}}_{c_1}V^\frac{1+\mu_1}{2}\\
				& \quad -\underbrace{\frac{p2^{\mu_2}}{N^{\mu_2-1}}(\Lambda_2(L_A)\alpha_\text{min})^\frac{1+\mu_2}{2}}_{c_2}V^\frac{1+\mu_2}{2}.
		\end{align*}}\normalsize
		Thus, with $p\geq 2\Delta\sqrt{\frac{N\alpha_\text{max}}{\Lambda_2(L_A)\alpha_\text{min}}}$, we obtain that $\dot V\leq -c_1V^\frac{1+\mu_1}{2}-c_2V^\frac{1+\mu_2}{2}$.
		Hence, per Lemma \ref{lemma:FxTS}, we conclude that there exists $T_2$ satisfying  $T_2\leq \frac{2}{c_1(1-\mu_1)}+\frac{2}{c_2(\mu_2-1)}$,
		such that $V(t) = 0$ for all $t\geq T_1+T_2$, or equivalently, $\lambda_i = \lambda_j = \bar\lambda$ for all $t\geq T_1 + T_2$ and $i,j\in\{1,\dots,N\}$. 
	\end{proof}
	
	\vspace{-.5em}
	\begin{Remark}\label{rem2}
		From Theorems \ref{thm1} and \ref{thm2}, it follows that \eqref{eq:P_star_without}-\eqref{eq:lambda_without} are satisfied for all $t\geq T_1+T_2$. Thus, the update laws \eqref{eq:Pi_dyn}-\eqref{eq:lambda_dyn} solve the ED problem without generation constraints in a fixed-time $\bar{T}=T_1+T_2$. Moreover, Theorem~\ref{thm2} guarantees robustness to the additive disturbance $\{\omega_i\}$.
	\end{Remark}\vspace{-.5em}
	There may exist communication link failures or additions among generator buses, which results in a time-varying communication topology. We model the underlying graph $G(t) = (A(t),\mathcal{V})$ through a switching signal $\rchi(t):\mathbb{R}_+\to\Psi$ as $G(t)=G_{\rchi(t)}\coloneqq(A_{\rchi(t)},\mathcal{V})$, where $\Psi=\{1,2,\dots,R\}$ is a finite set consisting of index numbers associated to specific adjacency matrices $A(t) = [a_{ij}(t)]\in \{A_1,\dots, A_R\}$. Here, the function $\rchi$ is a piecewise constant, right-continuous function of time. Let $t_0,t_1,\dots$ be the switching time sequence characterized by changes in information flow. For any time $t\in\left[t_i,t_{i+1}\right)$, the topology with adjacency matrix $A_{\rchi(t)}$ is active. The corollary below explores the impact of switching on the fixed-time convergence guarantees.
	% \vspace{-.5em}
	\begin{Cor}[{\bf Time-varying topology}]\label{cor1}
		Let the underlying topology for any time interval $\left[t_i,t_{i+1}\right)$ be connected. Then for the switching topology scenario defined by $\rchi(t)$, Theorems~\ref{thm1} and \ref{thm2} continue to hold for the ED problem without generation constraints.
	\end{Cor}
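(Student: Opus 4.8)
The plan is to treat the two theorems separately, since they interact with the switching in very different ways. For Theorem~\ref{thm1} I would first observe that the topology plays no role whatsoever. Writing $e_i = P_i-\frac{\lambda_i-\beta_i}{2\alpha_i}$ and forming $\dot e_i = \dot P_i - \frac{\dot\lambda_i}{2\alpha_i}$, the definition \eqref{eq:lambda_dyn} shows that the entire graph-dependent term $p\sum_j a_{ij}(\cdots)$, together with the noise $\omega_i$, cancels, leaving $\dot e_i = -\sgn^{\nu_1}(e_i)-\sgn^{\nu_2}(e_i)$. This is independent of the active adjacency matrix $A_{\rchi(t)}$ and of $\omega_i$, so the Lyapunov computation and the settling-time estimate \eqref{eq:T1_bound} go through verbatim for every realization of $\rchi(t)$, and $T_1$ is unchanged.

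For Theorem~\ref{thm2} the topology does enter, through $\Lambda_2(L_A)$, so I would proceed in three steps. First, I would verify that the weighted average $\bar\lambda = \frac{\Gamma}{N}\sum_i \frac{\lambda_i}{2\alpha_i}$ still satisfies $\dot{\bar\lambda} = \frac{\Gamma}{N}\sum_i\omega_i$ on each interval: this rests only on Lemma~\ref{lemma:sign}, which holds for \emph{any} undirected graph, hence for each $A_r$ with $r\in\Psi$. Consequently the consensus error $\tilde\lambda_i = \lambda_i-\bar\lambda$ and the candidate Lyapunov function $V = \frac{1}{2}\sum_i \frac{1}{2\alpha_i}\tilde\lambda_i^2$ are defined identically across all modes and are continuous across the switching instants $t_0,t_1,\dots$. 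This is the crucial structural fact: $V$ is a \emph{common} Lyapunov function and undergoes no jump when $\rchi(t)$ changes.

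Second, I would introduce the worst-case algebraic connectivity $\Lambda_2^{\min} \triangleq \min_{r\in\Psi}\Lambda_2(L_{A_r})$. Since each $A_r$ is connected, Lemma~\ref{lemma:Laplacian} gives $\Lambda_2(L_{A_r})>0$, and because $\Psi=\{1,\dots,R\}$ is finite we have $\Lambda_2^{\min}>0$. On any interval $[t_i,t_{i+1})$ the active graph satisfies $\Lambda_2(L_{A_{\rchi(t)}})\geq \Lambda_2^{\min}$, so the bound \eqref{eq:V1_dot-2} on $\dot V_1$ holds with $\Lambda_2(L_A)$ replaced by $\Lambda_2^{\min}$. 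Choosing $p\geq 2\Delta\sqrt{\tfrac{N\alpha_\text{max}}{\Lambda_2^{\min}\alpha_\text{min}}}$ then makes the $V^{1/2}$ coefficient nonnegative uniformly in time, yielding $\dot V\leq -c_1 V^{\frac{1+\mu_1}{2}} - c_2 V^{\frac{1+\mu_2}{2}}$ for almost every $t\geq T_1$, with $c_1,c_2$ evaluated at $\Lambda_2^{\min}$.

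Third, since this single differential inequality holds along the trajectory regardless of which mode is active, and since $V$ does not jump at switches, I would integrate it exactly as in Lemma~\ref{lemma:FxTS} to conclude $V(t)=0$ for all $t\geq T_1+T_2$ with the uniform bound $T_2\leq \frac{2}{c_1(1-\mu_1)}+\frac{2}{c_2(\mu_2-1)}$. The main obstacle to watch is precisely the switching behaviour of $V$: a generic switched system would require a common or compatible family of Lyapunov functions and possibly a dwell-time condition to rule out destabilizing jumps. Here, however, the topology-independence of the error dynamics (Theorem~\ref{thm1}) and of the average dynamics (via Lemma~\ref{lemma:sign}) guarantees that the \emph{same} $V$ decreases monotonically across every interval, so no dwell-time restriction is needed and Lemma~\ref{lemma:FxTS} applies without modification.
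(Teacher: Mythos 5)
Your proposal is correct and follows essentially the same route as the paper: treat $V$ as a common Lyapunov function across all modes, note that the topology enters only through $\Lambda_2(L_A)$ in \eqref{eq:V1_dot-2}, and replace it by the worst-case algebraic connectivity $\Lambda_2^{\min}=\min_{r\in\Psi}\Lambda_2(L_{A_r})$, which is positive because $\Psi$ is finite and every mode is connected. Your writeup is in fact slightly more careful than the paper's on two points it leaves implicit --- the topology-independence of the error dynamics underlying Theorem~\ref{thm1}, and the fact that the gain condition on $p$ must also be evaluated at $\Lambda_2^{\min}$, not just the settling-time coefficients $c_1, c_2$.
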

	\begin{proof}
		Note that $V$ in the proof of Theorem \ref{thm2} is a common Lyapunov function for \eqref{eq:Pi_dyn}-\eqref{eq:lambda_dyn}, under an arbitrary commutation among the set of connected graphs. Only place where the underlying network topology shows up explicitly is in \eqref{eq:V1_dot-2}, and consequently in the expression for the settling time in Theorem \ref{thm2}. Let $\Lambda_2^*$ denote the minimum of the second smallest eigenvalues of all graph Laplacians of the associated adjacency matrices, i.e., $\Lambda_2^*=\min\limits_t \Lambda_2(L_{A(t)})$. Since $\Psi$ is a finite set, the minimum exists. Moreover, the underlying graph is always assumed to be connected, and thus $\Lambda_2^*>0$. Thus, the inequality in \eqref{eq:V1_dot-2} holds with $\Lambda_2(L_A)$ replaced by $\Lambda_2^*$, and thus Theorem \ref{thm2} holds with suitably modified settling-time coefficients $c_1$ and $c_2$.
	\end{proof}
	
	\noindent \textbf{A note on discrete-time implementation}: Continuous-time algorithms offer effective insights into designing accelerated schemes for distributed optimization. However, sampling and acquisition constraints render implementation of continuous-time algorithms impractical. This note explores discrete analog of \eqref{eq:Pi_dyn}-\eqref{eq:lambda_dyn}, such that the resulting discrete-time dynamics of scheduled power dispatch and incremental costs are \emph{practically} fixed-time stable. The origin of a discrete-time dynamical system with state variable $x(\cdot)$ is globally practically fixed-time stable if for every $\epsilon>0$ there exists $N_\epsilon\in\mathbb{N}$ such that any solution $x(\cdot,x_0)$ satisfies $\|x(k,x_0)\|\leq\epsilon$ for $k\geq N_\epsilon$ independently of $x_0$ \cite{poly2018consistent}. Consider the continuous-time dynamical system of the form, $\dot{z} = -|z|z$, where $z\in\mathbb{R}$. The semi-implicit Euler-discretization scheme with step-size $h>0$ given by
	{\small\begin{align*}
			\frac{z(k\!+\!1)\!-\!z(k)}{h} = -|z(k)|z(k\!+\!1) \quad \Rightarrow z(k\!+\!1) = \frac{z(k)}{1+h|z(k)|},
	\end{align*}}\normalsize
	renders origin practically fixed-time stable, since $|z(1)|\leq h^{-1}$ independently of $z_0\triangleq z(0)$, and $|z(k)|\leq(kh)^{-1}$. The general idea of attack for consistent discretization is to hybridize a fixed-time consensus scheme on incremental cost \eqref{eq:lambda_dyn} with a discrete-time update equation on scheduled power dispatch, such that similar to \eqref{eq:Pi_dyn}, the following two conditions are satisfied: (1) $\sum_{i=1}^NP_i(k)=P_\text{tot}$, and (2) origin of discrete-time dynamical system with state-variable $z_i(k)\triangleq P_i(k)-(\lambda_i(k)-\beta_i)/(2\alpha_i)$ is practically fixed-time stable for all $i\in\{1,\dots,N\}$. To this end, we consider the following discrete-time update-laws for $\{\lambda_i\}$ and $\{P_i\}$:\vspace{-.5em}
	\begin{subequations}\label{eq:discrete_update}
		{\small\begin{align}
				\frac{\lambda_i(k+1)}{2\alpha_i} &= (W_k)_i\lambda(k)-\lambda_i(k)+P_i(k)+\frac{\beta_i}{2\alpha_i} \nonumber \\
				&\quad - \dfrac{P_i(k)-\left(\frac{\lambda_i(k)-\beta_i}{2\alpha_i}\right)}{1+h\left|P_i(k)-\left(\frac{\lambda_i(k)-\beta_i}{2\alpha_i}\right)\right|},\\
				P_i(k+1) &= -c_k(L_A)_i\lambda(k) + P_i(k),
		\end{align}}\normalsize
	\end{subequations}
	where $W_k\triangleq I-c_kL_A$, $c_k=1/\Lambda_{(k-1)\textup{mod}K+2}$, and $K$ is the number of non-zero distinct eigenvalues of $L_A$. It is easy to observe that $\sum_{i=1}^NP_i(k)=P_\text{tot}$ with $P_i(0)=\sum_{j=1}^md_{i_j}P_{\text{L}_j}$, and $z_i(k+1)=z_i(k)/(1+h|z_i(k)|)$, i.e., $|z_i(k)|\leq\epsilon$ after $1/(h\epsilon)$ iterations. Moreover, the consensus on incremental cost variables occurs in finite iterations per FACA~\cite{kibangou2012graph}. Note that FACA is invoked again for calculating optimal dispatch in the constrained ED problem as a discrete analog for fixed-time average consensus (see step 6 in Algorithm~\ref{alg}). Note that the discrete consensus scheme is adopted from FACA, and is not a direct discretization of \eqref{eq:Pi_dyn}-\eqref{eq:lambda_dyn}. Thus, the discretized scheme does not account for robustness to additive disturbances and time-varying topology. A detailed investigation into direct discretization scheme is beyond the scope of the current work.

	\vspace{-1em}\subsection{With generation constraints}\label{sec:with}

	Since \eqref{eq:lambda_star_with} captures the relationship between incremental costs of the constrained and the unconstrained ED problems, \eqref{eq:EDP} can also be solved in a fixed-time using Algorithm \ref{alg}. First, the unconstrained ED problem is solved (Step~1). Then, exploiting the relationship between incremental costs of the constrained and the unconstrained ED problems, the set $\Theta$ is updated an incremental fashion. If the constraints are inactive for a given generator, the optimal incremental cost is related to optimal dispatch through equality constraint, otherwise the dispatch values are saturated at the generation limits (Step~4). Steps~5-6 aim to compute the numerator and denominator terms in \eqref{eq:lambda_star_with}. Since, both the numerator and denominator terms involve summation, this can be done by running fixed-time average consensus for the tuple $\{(y_i,z_i)\}$. The update law \eqref{eq:lambda_dot_aft_T1} with $\alpha_i=0.5$ for all $i$ simply defines the fixed-time average consensus scheme. Incremental costs and dispatch values are then updated in Steps~7-8.
	
	\vspace{-1em}\begin{algorithm}
		\caption{Distributed Fixed-time Algorithm for ED}
		\begin{algorithmic}[1]\label{alg}
			\renewcommand{\algorithmicrequire}{\textbf{Input:}}
			\renewcommand{\algorithmicensure}{\textbf{Output:}}
			\REQUIRE cost-coefficients $\{\alpha_i,\beta_i,\gamma_i\}$, load-demands $\{P_{\text{L}_k}\}$
			\ENSURE  Optimal incremental cost \& dispatch $(\lambda^*,\{P_i^*\})$
			\\ \textit{Initialization}: $\Theta=\emptyset$
			\STATE Run \eqref{eq:Pi_dyn}-\eqref{eq:lambda_dyn} to solve unconstrained ED and find $\big(\bar\lambda^*,\{P_i^*\}\big)$. Set $(\lambda,\{P_i\})\gets\big(\bar\lambda^*,\{P_i^*\}\big)$
			\WHILE {Generation constraint violations}
			\STATE  $\Omega\triangleq\{i\notin\Theta:(P_i<P_i^\text{min})\wedge (P_i>P_i^\text{max})\}$, $\Theta\gets\Theta\cup\Omega$\vspace{-.5em}
			\STATE Calculate optimal dispatch using
			\begin{small}{\begin{equation}\label{eq:P_comp}
						P_i\gets\left\{\begin{array}{cl} \frac{\lambda-\beta_i}{2\alpha_i}, & i\notin\Theta, \\
							P_i^\text{min} \ \text{or} \ P_i^\text{max}, & i\in\Theta.
						\end{array}\right.
			\end{equation}}\end{small}
			\STATE \begin{small}{$(y_i(0),z_i(0))\gets\left\{\begin{array}{cl}
					\left(\frac{\lambda-2\alpha_iP_i-\beta_i}{2\alpha_i},0\right), & i\in\Theta,
					\\
					\left(0,\frac{1}{2\alpha_i}\right), & i\notin\Theta.\end{array}\right.$}\end{small}
			\STATE Run \eqref{eq:lambda_dot_aft_T1} with $\alpha_i=0.5$ on $\{y_i,z_i\}$ to obtain $(y_c,z_c)$
			\STATE $\lambda\gets\lambda+{y_c}/{z_c}$
			\STATE Calculate optimal dispatch using \eqref{eq:P_comp}
			\ENDWHILE
			\RETURN $(\lambda,\{P_i\})$ 
		\end{algorithmic}
	\end{algorithm}
	
	\vspace{-1em}\begin{Remark}\label{rem4}
		Steps 6 and 7 in the algorithm are related to \eqref{eq:lambda_star_with}, where it follows that $y_c=(1/N)\sum_{i\in\Theta}y_i(0)$ and $z_c=(1/N)\sum_{i\notin\Theta}z_i(0)$. In addition, since the number of generator buses are finite, it follows that the number of times the \textup{\texttt{While}}-loop gets called is also finite. Thus, the entire algorithm gets executed within a fixed-time.
	\end{Remark}
	
	\vspace{-1.5em}
	
	\section{Case Studies}\label{sec:examples}
	We now present numerical examples involving IEEE test cases. Simulation parameters in Theorems \ref{thm1}-\ref{thm2} are chosen as: $\mu_1=\nu_1=0.8$, $\mu_2 = \nu_2 = 1.2$, $p = 1485$ for IEEE-57 bus case. Unless stated otherwise, solid lines in all the example scenarios indicate true power dispatch from generators while dotted lines indicate optimal dispatch values.
	
	\begin{figure}[ht!]
		\centering
		\includegraphics[height=1.5in,width=1\columnwidth,clip]{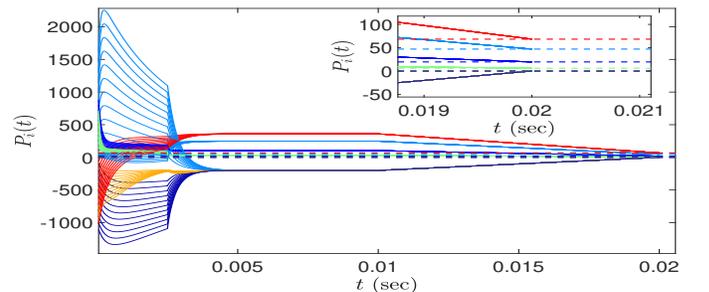}
		\vspace{-2em}
		\caption{Generator power $P_i$ for 57 bus example with generation constraints and switching topology for various initial conditions $P_i(0)$.}
		\label{fig:switch}
		\vspace{-1em}
	\end{figure}

	\vspace{-1em}\subsection{Switching communication topology}\label{subsec:case3}
	This case study concerns with ED problem for a 57-bus system with 7 generator buses, and further incorporates switching communication topology between generator buses. In particular, the communication topology between the generator buses in the specified 57-bus system is switched randomly in every 0.0025 seconds between randomly generated connected graphs. The parameters for cost functions are adopted from \cite{zimmerman2010matpower}. Figure~\ref{fig:switch} shows the convergence behavior of generator power dispatch under switching communication topology for various initial conditions.
	
	\vspace{-1em}\subsection{Time-varying demand and uncertain information}\label{subsec:case4}
	In this case study, a zero mean Gaussian communication disturbance is considered with variance of $0.01$. Additionally, the net load demand in the beginning is 141.13~MW, which then alternates between 69.83~MW and 212.81~MW at time instants 0.66s, 1.1s, 1.31s and 1.75s. Figure~\ref{fig:load_dist_57} shows the scheduled power dispatch from generators as the net load is varied with time. It can be seen that the generators rapidly adjust to variability in total load demand, and converge to optimal dispatch values in a fixed time.
	
	\begin{figure}[ht!]
		\centering
		\includegraphics[width=1\columnwidth,clip]{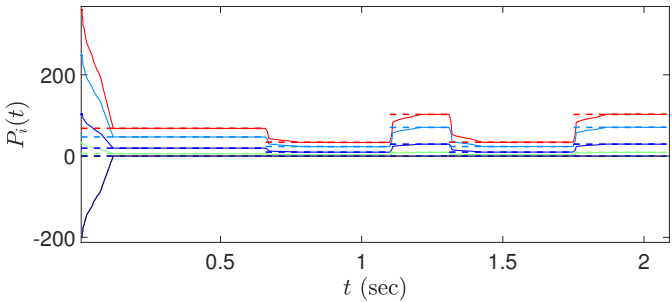}
		\vspace{-2em}
		\caption{Generator power $P_i$ for 57 bus example with generation constraints, communication disturbance and time varying load.}
		\label{fig:load_dist_57}
		\vspace{-1.5em}
	\end{figure}
	
	\vspace{-1em}\subsection{Convergence performance comparison}\label{subsec:comp}
	In this case study, we evaluate the performance of the discretized implementation of our fixed-time ED algorithm against the ICC algorithm \cite{zhang2012convergence}. For ease of illustration, the two algorithms are evaluated on the IEEE-30 bus network comprising of six generators. Figure~\ref{fig:comparison} shows the performance of the two algorithms for a net load demand of 250 MW, under a constant step-size of 0.1s for discretized implementation. As can be seen in Figure~\ref{fig:comparison}, the ICC algorithm requires nearly 5000 iterations for convergence, while the proposed fixed-time ED algorithm converges under 20 iterations. This super-accelerated convergence of our method is observed despite it being a fully distributed algorithm, whereas the ICC algorithm assumes a single leader node that aggregates information from every other node in the network. These results are consistent with the convergence behavior observed in \cite{chen2016distributed}, where both ICC algorithm, as well as a finite-time ED algorithms require more than 100$s$ (step-size 0.01$s$) for convergence.
	
	\begin{figure}[!ht]
		\centering
		\includegraphics[height=1.5in,width=1\columnwidth,clip]{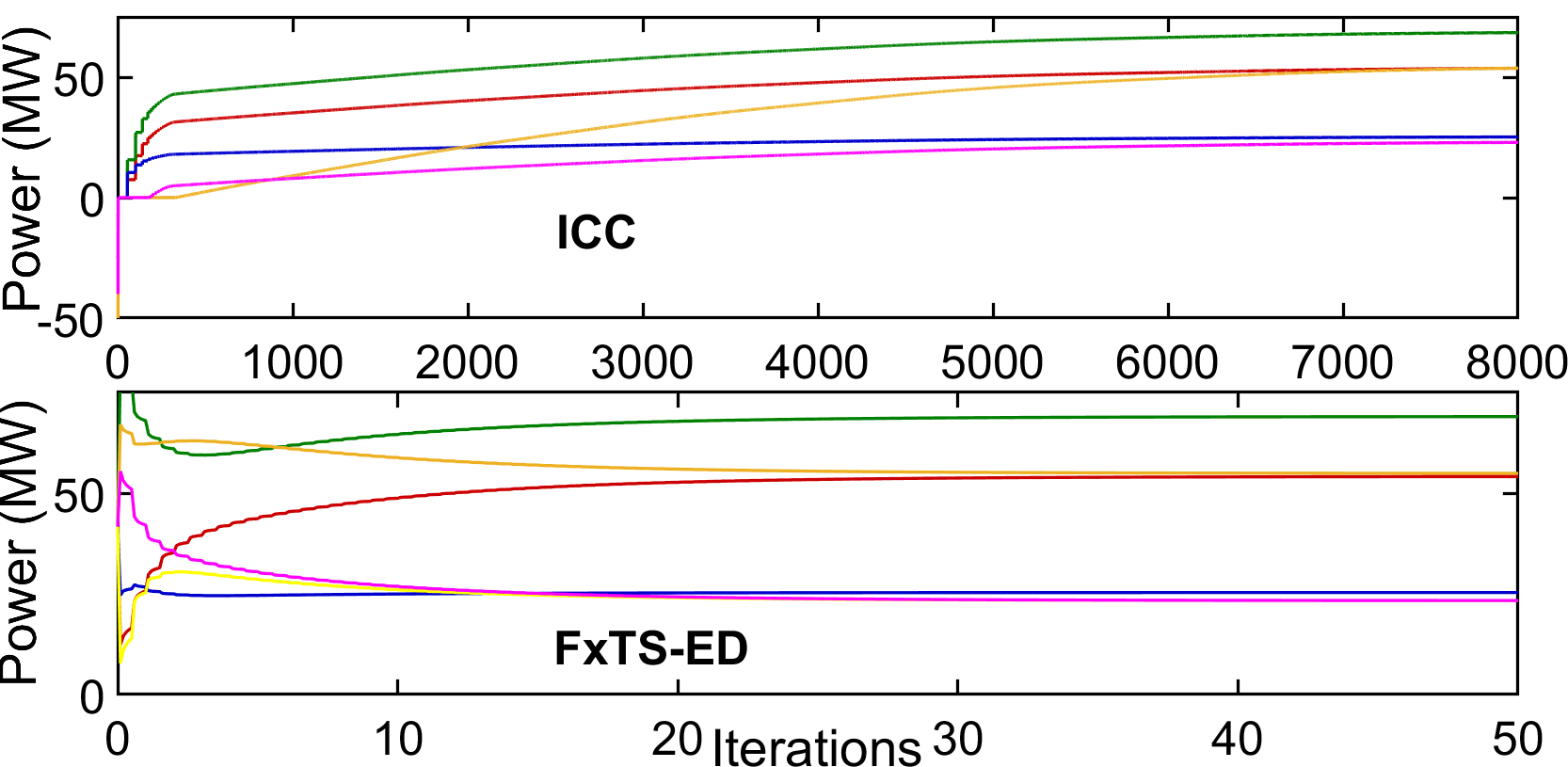}
		\vspace{-1em}
		\caption{Generator power $P_i$s for the incremental cost consensus (ICC) algorithm, and our fixed-time ED algorithm for IEEE-30 bus.}
		\label{fig:comparison}
		\vspace{-1.5em}
	\end{figure}
	
	% \begin{figure*}
	% 	\begin{center}
	% 		\begin{tabular}{cc}
	% 			\includegraphics[height=1.4in,width=1.31\columnwidth]{indi_power.png} & \includegraphics[height=1.4in,width=0.65\columnwidth]{demand.png} \cr
	% 			(a)  & (b) 
	% 		\end{tabular}
	% 		\vspace{-1em}
	% 		\caption{Comparison of our fixed-time economic dispatch algorithm with the incremental cost consensus (ICC) algorithm on IEEE-30 bus. (a) Trajectories of generator power $P_i$ for the two algorithms, (b) Total power $\sum P_i$ produced by all the generators.}
	% 		\label{fig:comparison}
	% 	\end{center}
	% 	\vspace{-2em}
	% \end{figure*}
	
	\vspace{-1em}
	\section{Conclusion}\label{sec:Conclusion}
	A novel, fixed-time convergent, distributed algorithm for solving constrained economic dispatch problem subject to communication uncertainties and time-varying topology is proposed. The algorithm is evaluated on standard IEEE test cases for several challenging scenarios ranging from unconstrained ED problem to constrained ED problem with time-varying load and communication topology, and it shown that algorithm exhibits accelerated convergence behavior. A discretization scheme is also suggested that renders the discrete-time implementation of the proposed continuous algorithm practically fixed-time convergent.\vspace{-1em}
	
	\bibliographystyle{IEEEtran}
	\bibliography{myreferences}

% Generated by IEEEtran.bst, version: 1.14 (2015/08/26)
\begin{thebibliography}{10}
\providecommand{\url}[1]{#1}
\csname url@samestyle\endcsname
\providecommand{\newblock}{\relax}
\providecommand{\bibinfo}[2]{#2}
\providecommand{\BIBentrySTDinterwordspacing}{\spaceskip=0pt\relax}
\providecommand{\BIBentryALTinterwordstretchfactor}{4}
\providecommand{\BIBentryALTinterwordspacing}{\spaceskip=\fontdimen2\font plus
\BIBentryALTinterwordstretchfactor\fontdimen3\font minus
  \fontdimen4\font\relax}
\providecommand{\BIBforeignlanguage}[2]{{%
\expandafter\ifx\csname l@#1\endcsname\relax
\typeout{** WARNING: IEEEtran.bst: No hyphenation pattern has been}%
\typeout{** loaded for the language `#1'. Using the pattern for}%
\typeout{** the default language instead.}%
\else
\language=\csname l@#1\endcsname
\fi
#2}}
\providecommand{\BIBdecl}{\relax}
\BIBdecl

\bibitem{wood2013power}
A.~J. Wood, B.~F. Wollenberg, and G.~B. Shebl{\'e}, \emph{Power generation,
  operation, and control}.\hskip 1em plus 0.5em minus 0.4em\relax John Wiley \&
  Sons, 2013.

\bibitem{lin1992direct}
C.~E. Lin, S.~T. Chen, and C.-L. Huang, ``A direct {Newton-Raphson} economic
  dispatch,'' \emph{IEEE Transactions on Power Systems}, vol.~7, no.~3, pp.
  1149--1154, 1992.

\bibitem{yan2019consensus}
X.~Yan, H.~Zhong, J.~Wang, and Z.~Tan, ``Consensus-based distributed economic
  dispatch with optimized transition matrix,'' in \emph{2019 IEEE Power \&
  Energy Society General Meeting}.\hskip 1em plus 0.5em minus 0.4em\relax IEEE,
  2019, pp. 1--5.

\bibitem{pourbabak2017novel}
H.~Pourbabak, J.~Luo, T.~Chen, and W.~Su, ``A novel consensus-based distributed
  algorithm for economic dispatch based on local estimation of power
  mismatch,'' \emph{IEEE Transactions on Smart Grid}, vol.~9, no.~6, pp.
  5930--5942, 2017.

\bibitem{chen2016distributed}
G.~Chen, J.~Ren, and E.~N. Feng, ``Distributed finite-time economic dispatch of
  a network of energy resources,'' \emph{IEEE Transactions on Smart Grid},
  vol.~8, no.~2, pp. 822--832, 2016.

\bibitem{feng2017finite}
Z.~Feng and G.~Hu, ``Finite-time distributed optimization with quadratic
  objective functions under uncertain information,'' in \emph{2017 IEEE 56th
  Annual Conference on Decision and Control}.\hskip 1em plus 0.5em minus
  0.4em\relax IEEE, pp. 208--213.

\bibitem{zhang2012convergence}
Z.~Zhang and M.-Y. Chow, ``Convergence analysis of the incremental cost
  consensus algorithm under different communication network topologies in a
  smart grid,'' \emph{IEEE Transactions on Power Systems}, vol.~27, no.~4, pp.
  1761--1768, 2012.

\bibitem{binetti2014distributed}
G.~Binetti, A.~Davoudi, F.~L. Lewis, D.~Naso, and B.~Turchiano, ``Distributed
  consensus-based economic dispatch with transmission losses,'' \emph{IEEE
  Transactions on Power Systems}, vol.~29, no.~4, pp. 1711--1720, 2014.

\bibitem{zhang2014online}
W.~Zhang, W.~Liu, X.~Wang, L.~Liu, and F.~Ferrese, ``Online optimal generation
  control based on constrained distributed gradient algorithm,'' \emph{IEEE
  Transactions on Power Systems}, vol.~30, no.~1, pp. 35--45, 2014.

\bibitem{yang2013consensus}
S.~Yang, S.~Tan, and J.-X. Xu, ``Consensus based approach for economic dispatch
  problem in a smart grid,'' \emph{IEEE Transactions on Power Systems},
  vol.~28, no.~4, pp. 4416--4426, 2013.

\bibitem{bhat2000finite}
S.~P. Bhat and D.~S. Bernstein, ``Finite-time stability of continuous
  autonomous systems,'' \emph{SICON}, vol.~38, no.~3, pp. 751--766, 2000.

\bibitem{wang2010finite}
L.~Wang and F.~Xiao, ``Finite-time consensus problems for networks of dynamic
  agents,'' \emph{IEEE Transactions on Automatic Control}, vol.~55, no.~4, pp.
  950--955, 2010.

\bibitem{kibangou2012graph}
A.~Y. Kibangou, ``Graph {Laplacian} based matrix design for finite-time
  distributed average consensus,'' in \emph{2012 American Control Conference
  (ACC)}.\hskip 1em plus 0.5em minus 0.4em\relax IEEE, 2012, pp. 1901--1906.

\bibitem{polyakov2012nonlinear}
A.~Polyakov, ``Nonlinear feedback design for fixed-time stabilization of linear
  control systems,'' \emph{IEEE Transactions on Automatic Control}, vol.~57,
  no.~8, p. 2106, 2012.

\bibitem{hardy1988inequalities}
G.~H. Hardy, J.~E. Littlewood, G.~P{\'o}lya \emph{et~al.},
  \emph{Inequalities}.\hskip 1em plus 0.5em minus 0.4em\relax Cambridge
  university press, 1988.

\bibitem{mesbahi2010graph}
M.~Mesbahi and M.~Egerstedt, \emph{Graph theoretic methods in multiagent
  networks}.\hskip 1em plus 0.5em minus 0.4em\relax Princeton University Press,
  2010, vol.~33.

\bibitem{poly2018consistent}
A.~Polyakov, D.~Efimov, and B.~Brogliato, ``Consistent discretization of
  finite-time and fixed-time stable systems,'' \emph{SIAM Journal on Control
  and Optimization}, vol.~57, no.~1, pp. 78--103, 2019.

\bibitem{zimmerman2010matpower}
R.~D. Zimmerman, C.~E. Murillo-S{\'a}nchez, and R.~J. Thomas, ``Matpower:
  Steady-state operations, planning, and analysis tools for power systems
  research and education,'' \emph{IEEE Transactions on power systems}, vol.~26,
  no.~1, pp. 12--19, 2010.

\end{thebibliography}
	
\end{document}